\definecolor{webgreen}{rgb}{0,.5,0}
\definecolor{webbrown}{rgb}{.6,0,0}
\newcommand{\seqnum}[1]{\href{http://oeis.org/#1}{\underline{#1}}}
\begin{document}
\begin{center}
\epsfxsize=4in
\end{center}

\theoremstyle{plain}
\newtheorem{theorem}{Theorem}
\newtheorem{corollary}[theorem]{Corollary}
\newtheorem{lemma}[theorem]{Lemma}
\newtheorem{proposition}[theorem]{Proposition}
\theoremstyle{definition}
\newtheorem{definition}[theorem]{Definition}
\newtheorem{problem}[theorem]{Problem}
\newtheorem{example}[theorem]{Example}
\newtheorem{conjecture}[theorem]{Conjecture}
\newtheorem{claim}[theorem]{Claim}
\newtheorem{remark}[theorem]{Remark}
\allowdisplaybreaks

\begin{center}
\vskip 1cm{\LARGE\bf On Polynomial Pairs of Integers}\
\vskip 1cm
\large Martianus Frederic Ezerman\\
Division of Mathematical Sciences\\
School of Physical and Mathematical Sciences\\
Nanyang Technological University\\
21 Nanyang Link, Singapore 637371\\
\href{mailto:fredezerman@ntu.edu.sg}{\tt fredezerman@ntu.edu.sg}\\
\ \\
Bertrand Meyer\\
Telecom ParisTech\\
46 rue Barrault, 75634 Paris Cedex 13, France\\
\href{mailto:meyer@enst.fr}{\tt meyer@enst.fr}\\
\ \\
Patrick Sol\'{e}\\
Telecom ParisTech\\
46 rue Barrault, 75634 Paris Cedex 13, France\\
and\\
Mathematics Department, King Abdulaziz University\\
P.~O.~Box 80203, Jeddah 21589, Saudi Arabia\\
\href{mailto:sole@enst.fr}{\tt sole@enst.fr}\\
\end{center}

\def\Q{{\mathbb Q}}
\def\F{{\mathbb F}}
\def\N{{\mathbb N}}
\def\DD{\mathcal{D}}
\def\SS{\mathcal{S}}
\def\supp{\operatorname{supp}}
\def\wt{\operatorname{wt}}
\newcommand{\gen}[1]{{\langle #1 \rangle}}

\vskip .2 in
\begin{abstract}
The reversal of a positive integer $A$ is the number obtained by reading $A$ backwards in its decimal representation. 
A pair $(A,B)$ of positive integers is said to be palindromic if the reversal of the product $A \times B$  is equal to the 
product of the reversals of $A$ and of $B$. A pair $(A,B)$ of positive integers is said to be polynomial if the product 
$A \times B$ can be performed  without carry. 

In this paper, we use polynomial pairs in constructing and in studying the properties of palindromic pairs.
It is shown that polynomial pairs are always palindromic. It is further conjectured that, provided that neither $A$ nor $B$ 
is itself a palindrome, all palindromic pairs are polynomial. A connection is made with classical topics in recreational 
mathematics such as reversal multiplication, palindromic squares, and repunits.
\end{abstract}
\section{Introduction}
On March 13, 2012 the following identity appeared on K.~T.~Arasu's Facebook posting\footnote{His account has since been deactivated 
for personal reasons.}:
\begin{quote}
Notice that
\begin{equation*}
25986 = 213 \times 122\text{.}
\end{equation*}
Now, read the expression above in reverse order and observe that
\begin{equation*}
221 \times 312 = 68952\text{.}
\end{equation*}
\end{quote}

The point here is, of course, that the second equality still holds in first order arithmetic!

For $A \in \N$ let the {\it reversal} of $A$, denoted by $A^{*}$, be the integer obtained by reading $A$ backwards in base $10$. 
We say that $A$ is a {\it palindrome} if $A=A^{*}$. In this paper, we investigate how to determine which pairs $(A,B)$ of positive 
integers satisfy the property
\begin{equation}\label{eq:one}
C = A \times B \text{ and } C^{*} = A^{*} \times B^{*} \text{.}
\end{equation}
In words, the product of the reversals is the reversal of the product. We shall call such a pair a {\it palindromic pair}.

Note that there are integers $C$ with more than one corresponding pair $(A,B)$ satisfying Equation~\ref{eq:one}. For example, we have
\begin{equation*}
2448 = 12 \times 204 = 24 \times  102 \text{.}
\end{equation*}
Upon reversal, one has
\begin{equation*}
8442 = 21 \times 402 = 42 \times  201 \text{.}
\end{equation*} 
It is easy to see that palindromic pairs always occur in distinct 
pairs $(A,B)$ and $(A^{*},B^{*})$ unless both $A$ and $B$ are palindromes. The pair $(12,13)$, for instance, comes with the pair 
$(21,31)$ upon reversal.

The question of how to characterize palindromic pairs had appeared in~\cite[p.\ 14]{BC} where the pair $(122,213)$ was given, yet this matter 
has hardly been looked at more closely. In this short note we introduce the notion of {\it polynomial pairs} as a tool to study palindromic pairs. 
We show that all the examples of palindromic pairs presented below can be explained in terms of polynomial pairs. We conjecture, but cannot yet 
prove that, when neither $A$ nor $B$ is a palindrome, all palindromic pairs $(A,B)$ are polynomial pairs.

The concept of polynomial pairs intersects many classical topics in recreational mathematics. An interesting topic concerns the
{\it repunits} which are numbers all of whose digits are $1$~\cite[Ch.\ 11]{B}. Another topic deals with a known technique to produce 
palindromes usually referred to as {\it reversal multiplication}~\cite{won}. The integers $A$ with the property that  $A \times A^{*}$ is 
a palindrome form Sequence $A062936$ in the Online Encyclopedia of Integer Sequences (henceforth, OEIS)~\cite{OEIS}. 
Whenever $(A,A^{*})$ form a polynomial pair, we learn that reversal multiplication always produces a palindrome. 

The material is arranged as follows. Section~\ref{sec:formal} introduces and Section~\ref{sec:three} develops the concept of palindromic pairs. 
Section~\ref{sec:four} explores the multiplicity of the representation of a repunit as a product of a palindromic pair of integers. Section~\ref{sec:five} covers the special case where the two members of a palindromic pair are reversals of each other. Section~\ref{sec:six} is dedicated to palindromes that are perfect squares. Section~\ref{sec:seven} replaces multiplication by addition in the definition of a palindromic pair. The paper ends with a summary.

\section{Formalization}\label{sec:formal}
To formalize the problem mathematically, some notation is in order. Let 
\begin{equation*}
A := \sum_{i=0}^{a} a_{i} 10^{i}
\end{equation*} 
be an integer expressed in base $10$. 
Its reversal is
\begin{equation*}
A^{*}:=\sum_{i=0}^{a} a_{i} 10^{a-i}\text{.}
\end{equation*} 
Define the polynomial $P(A,x):=\sum_{i=0}^{a} a_{i} x^{i} \in \Q[x]$
so that $P(A,10)=A$. 
Then its {\it reciprocal}
\begin{equation*}
P^{*}(A,x):=\sum_{i=0}^{a} a_{i} x^{a-i}=x^{a} P(A,1/x)
\end{equation*}
satisfies $P^{*}(A,10)=A^{*}$. A pair $(A,B)$ of not necessarily distinct positive integers is said to be a {\it palindromic pair} if
\begin{equation*}
P^*(A,10) P^*(B,10) = P^*(A \times B,10) \text{.}
\end{equation*}

We shall say that the pair $(A,B)$ is {\it polynomial} if 
\begin{equation*}
P(A,x)P(B,x) = P(A \times B,x) \text{.}
\end{equation*}

The pair $(12,21)$, for instance, is a polynomial pair since
\begin{equation*}
P(12 \times 21,x)=2x^{2}+5x+2 = (x+2)(2x+1)\text{,}
\end{equation*}
but $(13,15)$ is not a polynomial pair because
\begin{equation*}
(x+3) (x+5) = x^{2} + 8x + 15 \text{ while } P(13 \times 15,x) = x^{2}+9x+5 \text{.}
\end{equation*}

The following characterization of polynomial pairs will be used repeatedly.
\begin{proposition}\label{prop:equiv}
The following assertions are equivalent.
\begin{enumerate}[1)]
\item $(A,B)$ is a polynomial pair.
\item The multiplication of $A$ by $B$ can be performed without carry.
\item The coefficients of the polynomial $P(A,x)P(B,x)$ are bounded above by $9$.
\end{enumerate}
\end{proposition}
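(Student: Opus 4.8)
The plan is to establish the two equivalences $1) \Leftrightarrow 3)$ and $2) \Leftrightarrow 3)$, using assertion 3) as the common hinge. Writing $P(A,x) = \sum_{i=0}^{a} a_i x^i$ and $P(B,x) = \sum_{j=0}^{b} b_j x^j$ with every $a_i, b_j \in \{0,1,\ldots,9\}$, the first step is to record that
\[
P(A,x)\,P(B,x) = \sum_{k} c_k\, x^k, \qquad c_k := \sum_{i+j=k} a_i b_j,
\]
so that each coefficient $c_k$ is a non-negative integer. Evaluating at $x=10$ yields $\sum_k c_k 10^k = A \times B$, a relation I will use in both directions below.

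For $1) \Leftrightarrow 3)$, I would compare the coefficient sequence $(c_k)$ with the digit sequence of $A \times B$, which is precisely the coefficient sequence of $P(A \times B, x)$ and lies in $\{0,\ldots,9\}$ by the definition of base-$10$ representation. If 3) holds, then $(c_k)$ is a sequence of non-negative integers, each at most $9$, whose weighted sum $\sum_k c_k 10^k$ equals $A \times B$; by the uniqueness of the base-$10$ expansion, the $c_k$ must coincide with the digits of $A \times B$, giving $P(A,x)P(B,x) = P(A \times B, x)$, which is 1). Conversely, if 1) holds, then the $c_k$ are literally the digits of $A \times B$ and hence bounded above by $9$, which is 3). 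The only genuine mathematical ingredient here is the uniqueness of representation in base $10$.

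For $2) \Leftrightarrow 3)$, I would unwind the schoolbook multiplication algorithm: the total contribution to the $10^k$ place, computed before any carrying is performed, is exactly the column sum $c_k = \sum_{i+j=k} a_i b_j$. A carry propagates out of position $k$ precisely when this accumulated value exceeds $9$, so the product $A \times B$ can be formed with no carry at all if and only if $c_k \leq 9$ for every $k$, which is assertion 3).

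The step I expect to demand the most care is the formalization of condition 2): the phrase \emph{performed without carry} is an informal description of an algorithm, and the real task is to pin it down as the statement that every column sum $c_k$ fits within a single digit. Once that identification is made explicit, its equivalence with 3) is immediate, and the whole proposition rests on nothing deeper than the uniqueness of the base-$10$ expansion invoked in $1) \Leftrightarrow 3)$.
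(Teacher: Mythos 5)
Your proof is correct and rests on the same two ingredients as the paper's: the uniqueness of the base-$10$ expansion (to pass between condition 3) and condition 1)) and the identification of ``no carry'' with every column sum $c_k=\sum_{i+j=k}a_ib_j$ being at most $9$. The only difference is organizational --- you prove the two biconditionals $1)\Leftrightarrow 3)$ and $2)\Leftrightarrow 3)$ with 3) as the hinge, while the paper runs the cycle $1)\Rightarrow 2)\Rightarrow 3)\Rightarrow 1)$ (the first two by contrapositive, picking the smallest index $j$ with $c_j>9$) --- but the substance is the same.
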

\begin{proof} 
Let $j$ be the smallest integer such that $c_{j}:=\sum_{j=i+k} a_{i} b_{k} > 9$. Then $c_{j}$ is 
the coefficient of $x^{j}$ in $P(A,x) P(B,x)$, while the coefficient of $x^{j}$ in $P(A \times B,x)$ 
is $c_{j} \pmod{10} \neq c_{j}$. Thus, 1) implies 2) by contrapositive argument.

Now, assume that there is some $j$ such that $c_{j}:=\sum_{j=i+k} a_{i} b_{k} > 9$. 
Then, in the multiplication $A \times B$, the term $y:= \left \lfloor \frac{c_{j}}{10} \right \rfloor$ 
is carried over to the coefficient of $10^{j+1}$. This establishes that 2) implies 3).

Lastly, to show that 3) implies 1), we begin by substituting $x=10$. Hence,
\begin{equation*}
A \times B = P(A,10)P(B,10)=\sum_{k=0}^{a+b} \left(\sum_{i+j=k} a_{i} b_{j}\right) 10^{k}.
\end{equation*}
The coefficient of $x^{k}$ in $P(A \times B, x)$ is $\sum_{k=i+j} a_{i} b_{j}$, which is assumed to be $\leq 9$. 
This means that $(A,B)$ is indeed a polynomial pair. The proof is therefore complete.
\end{proof}

Polynomial pairs are palindromic pairs as the next result shows.
\begin{proposition}
A polynomial pair $(A,B)$ is palindromic.
\end{proposition}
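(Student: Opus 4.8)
The plan is to promote the defining polynomial identity of a polynomial pair into the analogous identity for the reciprocal polynomials, and then to specialize at $x=10$. First I would set up the degree bookkeeping. Write $a = \deg P(A,x)$ and $b = \deg P(B,x)$; since the leading digits $a_{a}$ and $b_{b}$ of positive integers are nonzero, these are the genuine degrees. Because $(A,B)$ is a polynomial pair, we have $P(A \times B, x) = P(A,x) P(B,x)$, so $P(A \times B, x)$ has degree $a+b$ with leading coefficient $a_{a} b_{b} \geq 1$. In particular $A \times B$ has exactly $a+b+1$ digits, a fact I will need below.

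Next I would exploit the identity $P^{*}(A,x) = x^{a} P(A,1/x)$ recorded in the formalization. Multiplying the two reciprocals gives $P^{*}(A,x) P^{*}(B,x) = x^{a+b} P(A,1/x) P(B,1/x)$. Substituting $1/x$ for $x$ in the polynomial-pair identity yields $P(A,1/x) P(B,1/x) = P(A \times B, 1/x)$, whence $P^{*}(A,x) P^{*}(B,x) = x^{a+b} P(A \times B, 1/x)$. The final move is to recognize the right-hand side as a reciprocal: since $\deg P(A \times B, x) = a+b$, the very definition of the reciprocal gives $P^{*}(A \times B, x) = x^{a+b} P(A \times B, 1/x)$. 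This delivers the polynomial identity $P^{*}(A,x) P^{*}(B,x) = P^{*}(A \times B, x)$, and evaluating both sides at $x=10$ produces exactly the palindromic condition $P^{*}(A,10) P^{*}(B,10) = P^{*}(A \times B, 10)$.

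I expect the only delicate point to be the degree matching in that last step, because the reciprocal operation is defined relative to the degree of the polynomial it acts on. The whole argument hinges on $A \times B$ having precisely $a+b+1$ digits, so that the exponent $x^{a+b}$ agrees with $x^{\deg P(A \times B, x)}$. This is exactly where the polynomial-pair hypothesis does its work: by Proposition~\ref{prop:equiv}, the absence of carries guarantees that the leading coefficient $a_{a} b_{b}$ is preserved and that no inflation of the digit count occurs. For a general pair, carrying could change the number of digits of $A \times B$, which would shift the exponent and break the reciprocal identity, so it is essential to invoke the no-carry property here rather than merely the numerical value of the product.
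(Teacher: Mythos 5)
Your proof is correct and follows essentially the same route as the paper, which simply says ``by taking reciprocals'' and evaluates at $x=10$. The degree bookkeeping you supply --- checking that $\deg P(A\times B,x)=a+b$ so that the reciprocal of the product is the product of the reciprocals --- is exactly the detail the paper leaves implicit, and your observation that the polynomial-pair hypothesis is what guarantees this is the right justification.
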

\begin{proof}
If $P(A,x)P(B,x)=P(A \times B,x)$, then, by taking reciprocals, we get
\begin{equation*}
P^*(A,x)P^*(B,x)=P^*(A \times B,x)\text{.}
\end{equation*}
Using $x=10$ completes the proof.
\end{proof}

This observation raises an initial question:
\begin{problem}\label{prob:1}
Are there palindromic pairs that are not polynomial?
\end{problem}

Our investigation quickly reveals that the answer is yes. If we allow either $A$ or $B$ to be palindromes then there are palindromic pairs which are not polynomial pairs.

The test that a pair $(A,B)$ is palindromic is done simply by checking if the definition is satisfied. We record $A,B$, and $C$ whenever we have $A \times B = C$ and $A^{*} \times B^{*} = C^{*}$. We then perform a check if the multiplication of $A$ by $B$ can be performed without carry. The pair $(A,B)$ that fails to pass this check is not a polynomial pair by Proposition~\ref{prop:equiv}. Table~\ref{table:NonPol} provides the list of all such pairs $(A,B)$ with $A \leq B$ and $A \times B = C \leq 10^{7}$ generated by exhaustive search. We requires $A \leq B$ to avoid duplication of pairing.

\begin{table}
\caption{Palindromic but not polynomial pairs $(A,B)$ with $A \leq B$ and $A \times B \leq 10^{7}$}
\label{table:NonPol}
\centering
\begin{tabular}{ c  c  | c   c  | c   c  | c  c }
\hline
$(A,B)$ & $A \times B$ & $(A,B)$ & $A \times B$ & $(A,B)$ & $A \times B$ & $(A,B)$ & $A \times B$ \\
\hline
$ ( 7 , 88) $  & $ 616 $ & $ ( 555 , 979) $ & $ 543345 $   & $ ( 737 , 888) $  & $ 654456 $  & $ ( 707 , 8558) $ & $ 6050506 $ \\
$ ( 8 , 77) $  &         & $ ( 55 , 9999) $ & $ 549945 $   & $ ( 777 , 858) $  & $ 666666 $  & $ ( 7 , 880088) $ & $ 6160616 $ \\
$ ( 55 , 99) $ & $5445 $ & $ ( 99 , 5555) $ &              & $ ( 969 , 5335) $ & $ 5169615 $ & $ ( 8 , 770077) $ & \\
$ ( 7 , 858) $ & $6006 $ & $ ( 707 , 858) $ & $ 606606 $   & $ ( 575 , 9119) $ & $ 5243425 $ & $ ( 77 , 80008) $ & \\

$ ( 77 , 88) $  & $6776  $ &  $ ( 7 , 88088) $ & $616616 $ & $ ( 979 , 5555) $ &                    & $ ( 88 , 70007) $ &  \\
$ ( 55 , 999) $ & $54945 $ &  $ ( 8 , 77077) $ &           & $ ( 55 , 99999) $ & $ 5499945 $        & $ ( 77 , 80088) $ & $\mathbf{6166776}$ \\ 
$ ( 99 , 555) $ &          &  $ ( 77 , 8008) $ &           & $ ( 99 , 55555) $ &                    & $ ( 88 , 70077) $ & \\
$ ( 77 , 858) $ & $66066 $ &  $ ( 88 , 7007) $ &           & $ ( 7 , 858088) $ & $\mathbf{6006616}$ & $ ( 898 , 7227) $ &  $ 6489846 $ \\
\hline
\end{tabular}
\end{table}

In addition to providing a positive answer to Problem~\ref{prob:1}, the table reveals some interesting facts. Except for the two values of 
$C$ printed in boldface, all other $C$s are themselves palindromes in which case {\em both} $A$ and $B$ are palindromes. The pairs 
$(7,858088)$, yielding $C=6006616$, and $(77,80088)$ and $(88,70077)$, giving $C=6166776$, contain a palindrome $A$. On the other hand, 
up to $C \leq 10^{7}$, no palindromic pairs were found, with neither $A$ nor $B$ being a palindrome, that was not polynomial. Computational 
evidence strongly suggests the following conjecture.

\begin{conjecture}\label{conj:main}
If $(A,B)$ is a palindromic pair, with neither $A$ nor $B$ a palindrome, then $(A,B)$ is a polynomial pair.
\end{conjecture}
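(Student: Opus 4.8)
The plan is to convert the palindromic hypothesis into one integer identity and then squeeze enough rigidity out of the carry pattern of the multiplication to conclude, via Proposition~\ref{prop:equiv}, that there are no carries at all. Write $A=\sum_{i=0}^{a}a_i10^{i}$ and $B=\sum_{j=0}^{b}b_j10^{j}$, put $n:=a+b$, and expand $P(A,x)P(B,x)=\sum_{k=0}^{n}c_kx^{k}$ with $c_k=\sum_{i+j=k}a_ib_j\ge 0$. Let $C=A\times B$ have decimal digits $d_0,\dots,d_m$ (so $d_m\neq 0$), and record the carries by $c_i+t_{i-1}=d_i+10\,t_i$, $t_{-1}=0$; by Proposition~\ref{prop:equiv} the pair is polynomial precisely when every $t_i=0$. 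Writing out $P^{*}(A,10)P^{*}(B,10)=P^{*}(C,10)$ through the reciprocal polynomials turns the palindromic hypothesis into the single identity
\[ \sum_{i=0}^{m}d_i\,10^{\,m-i}=\sum_{i=0}^{n}c_i\,10^{\,n-i}, \]
and the already-known forward implication is just the carry-free instance $m=n$, $t_i\equiv 0$. At the outset I would normalize to $a_0,b_0\neq 0$, the one genuinely delicate assumption, discussed at the end.

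Next I would establish a length dichotomy. From $10^{n}\le C<10^{n+2}$ one gets $m\in\{n,n+1\}$, with $m=n+1$ if and only if the top carry $t_n\ge 1$. Substituting $c_i=d_i+10\,t_i-t_{i-1}$ into $\sum_{i=0}^{n}c_i10^{n-i}$ and telescoping the carry terms produces
\[ \sum_{i=0}^{n}c_i\,10^{\,n-i}=\sum_{i=0}^{n}d_i\,10^{\,n-i}+10\,t_n+99\sum_{i=0}^{n-1}t_i\,10^{\,n-1-i}. \]
In the non-overflow case $m=n$ the palindromic identity equates the left-hand side with $\sum_{i=0}^{n}d_i10^{n-i}$, forcing $10\,t_n+99\sum_{i=0}^{n-1}t_i10^{n-1-i}=0$ and hence $t_i\equiv 0$: the pair is polynomial. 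Therefore any palindromic pair that is not polynomial must overflow by exactly one digit, $m=n+1$; this matches every non-polynomial entry of Table~\ref{table:NonPol}, each of which carries a single extra leading digit.

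In the surviving case $m=n+1$ the same computation, now using $\sum_{i=0}^{n+1}d_i10^{n+1-i}=10\sum_{i=0}^{n}d_i10^{n-i}+t_n$, collapses to the governing equation
\[ \sum_{i=0}^{n}d_i\,10^{\,n-i}=t_n+11\sum_{i=0}^{n-1}t_i\,10^{\,n-1-i}, \]
whose left side is the reversal of the bottom $n+1$ digits of $C$ and whose right side is built from the carry string $(t_0,\dots,t_{n})$. The factor $11$, the arithmetic hallmark of palindromy, is exactly what one hopes to see. From here the strategy is to show that this identity, fed back into the recursion defining the $t_i$, forces the carry string to be symmetric, and then to transport that symmetry through $c_k=c_{n-k}$ down to a digit symmetry of $A$ or of $B$ --- that is, to force one of them to be a palindrome, against the hypothesis.

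The crux, and the step I expect to resist, is this final transport. The map $(c_i)\mapsto(t_i)$ is nonlinear and nonlocal --- one oversized convolution coefficient can launch carries that propagate arbitrarily far --- so symmetry of $A$ or $B$ cannot simply be read off the aggregate equation above by matching coefficients. I would try a descent on $n$, stripping the outer digits $a_0,a_a,b_0,b_b$, pinning the extreme carries $t_0$ and $t_n$ from the units and leading behaviour, and recursing on the interior; but controlling the middle of the carry string appears to need a genuinely new idea. A second, quieter hazard is the trailing-zero normalization: reversal annihilates trailing zeros, and indeed $(858088,70)$ is palindromic and non-polynomial with neither factor a palindrome, merely inheriting the behaviour of $(858088,7)$. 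Thus the statement as it stands needs the proviso $a_0,b_0\neq 0$ (equivalently, that it be read for trailing-zero-free pairs), and it is precisely under this proviso that the programme above has a chance.
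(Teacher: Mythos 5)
This statement is Conjecture~\ref{conj:main}: the paper offers no proof of it, explicitly describes it as something the authors ``conjecture, but cannot yet prove,'' and even offers a prize for its resolution. So there is no proof of record to compare your attempt against, and the only question is whether your proposal closes the problem. It does not, and you say so yourself: the plan reduces the conjecture to showing that the ``governing equation'' $\sum_{i=0}^{n}d_i10^{n-i}=t_n+11\sum_{i=0}^{n-1}t_i10^{n-1-i}$ forces one of $A,B$ to be a palindrome, and that transport from a statement about the aggregated carry string to a digit symmetry of one factor is precisely the open difficulty. The parts you do carry out are correct and worthwhile: the telescoping of $c_i=d_i+10t_i-t_{i-1}$ does give $\sum c_i10^{n-i}=\sum d_i10^{n-i}+10t_n+99\sum_{i<n}t_i10^{n-1-i}$, so in the equal-length case $m=n$ the nonnegativity of the carries kills them all and the pair is polynomial; hence any non-polynomial palindromic pair must overflow by exactly one digit. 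That is a genuine (if modest) partial result not recorded in the paper, which restricts where a counterexample could live.

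The most valuable thing in your write-up is the trailing-zero remark, and you undersell it: $(70,\,858088)$ --- or the smaller $(70,\,880)$, with $70\times880=61600$ reversing to $616=7\times88$ --- is a bona fide counterexample to Conjecture~\ref{conj:main} \emph{as literally stated}, since neither factor is a palindrome under the paper's definition of reversal, the pair satisfies the palindromic identity, and the multiplication carries. These escape Table~\ref{table:NonPol} only because the authors' search evidently excluded factors divisible by $10$ (or the relevant products exceed $10^{7}$). So before any proof is attempted, the conjecture must be amended to require $10\nmid A$ and $10\nmid B$; your ``quieter hazard'' is in fact a refutation of the unamended statement and should be stated as such. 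With that amendment in place, your programme is a reasonable line of attack, but as submitted it is a reduction plus an honestly acknowledged gap, not a proof.
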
 
In attempting to answer the conjecture, we begin by establishing properties of polynomial pairs in the next section.

\section{Some Properties of Polynomial Pairs}\label{sec:three}
For $A \in \N$, let $A_{\infty}$ denote the maximum of the coefficients of $P(A,x)$.

\begin{proposition}\label{prop:ub}
If $(A,B)$ is a polynomial pair, then $A_{\infty} B_{\infty} \leq 9$. If, moreover, $A_\infty \geq 5$, then $B_{\infty}=1$.
\end{proposition}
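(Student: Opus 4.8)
The plan is to read the statement directly off the third characterization in Proposition~\ref{prop:equiv}, namely that $(A,B)$ being a polynomial pair is equivalent to every coefficient of $P(A,x)P(B,x)$ being at most $9$. First I would fix an index $i_0$ at which the maximum digit of $A$ is attained, so that $a_{i_0}=A_\infty$, and similarly an index $j_0$ with $b_{j_0}=B_\infty$. The key idea is then to look at a single, well-chosen coefficient of the product polynomial, rather than at all of them: the coefficient of $x^{i_0+j_0}$.

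Concretely, writing that coefficient as $c_{i_0+j_0}=\sum_{i+j=i_0+j_0} a_i b_j$, I would observe that every summand is nonnegative (digits are nonnegative) and that the particular term with $i=i_0$ and $j=j_0$ equals $a_{i_0}b_{j_0}=A_\infty B_\infty$. Discarding the other nonnegative contributions gives $A_\infty B_\infty \le c_{i_0+j_0}$. Since $(A,B)$ is polynomial, Proposition~\ref{prop:equiv} yields $c_{i_0+j_0}\le 9$, and the first assertion $A_\infty B_\infty \le 9$ follows at once. For the second assertion I would simply combine this bound with the hypothesis $A_\infty\ge 5$, which forces $B_\infty \le 9/A_\infty \le 9/5 < 2$, hence $B_\infty \le 1$; since $B$ is a positive integer its leading digit is at least $1$, so $B_\infty \ge 1$, and therefore $B_\infty=1$.

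I do not expect a genuine obstacle here, as the argument is essentially a one-line consequence of the carry-free characterization. The only point that requires care is the choice of the exponent $i_0+j_0$: one must locate the maxima of the two digit sequences \emph{independently} and then isolate the dominant product $A_\infty B_\infty$ as one of the nonnegative contributions to a bounded coefficient. Everything else is routine bookkeeping, and the second statement reduces to the trivial observation that $5\cdot 2 > 9$.
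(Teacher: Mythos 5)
Your proposal is correct and follows essentially the same route as the paper: both isolate the coefficient of $x^{i_0+j_0}$ in $P(A,x)P(B,x)$, note that it contains the nonnegative term $A_\infty B_\infty$, and invoke Proposition~\ref{prop:equiv} Part 3) to bound it by $9$. Your treatment is in fact slightly more complete, since you spell out the deduction $B_\infty \le 9/5 < 2$ together with $B_\infty \ge 1$, which the paper leaves implicit.
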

\begin{proof}
This proposition is a direct consequence of Proposition~\ref{prop:equiv}. Let $j$ and $l$ be, respectively, the smallest index such that 
$a_{j}= A_{\infty}$ and $b_{l}=B_{\infty}$. Then $A_{\infty} B_{\infty} > 9$ would imply that the coefficient of $x^{j+l}$ in the multiplication 
$P(A,x) P(B,x)$ is $> 9$, violating Proposition~\ref{prop:equiv} Part 3).
\end{proof}
To derive a sufficient condition for $(A,B)$ to be a palindromic pair we define the norm of an integer by the formula $A_1=P(A,1).$
\begin{proposition}\label{prop:bound}
For $A,B \in \N$, $(AB)_{\infty} \leq A_{\infty} B_{1}$.
\end{proposition}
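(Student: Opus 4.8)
The plan is to compare the \emph{formal} product polynomial $P(A,x)P(B,x)$, whose coefficients $c_{k}:=\sum_{i+j=k} a_{i} b_{j}$ may well exceed $9$, with the \emph{reduced} polynomial $P(A\times B,x)$, whose coefficients are the genuine decimal digits of the product and are therefore all at most $9$. By definition $(AB)_{\infty}$ is the largest such digit, so the whole task is to understand how the carrying process turns the $c_{k}$ into the actual digits of $A\times B$.

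First I would bound the formal coefficients directly. Since every $a_{i}\leq A_{\infty}$,
\begin{equation*}
c_{k}=\sum_{i+j=k} a_{i} b_{j}\leq A_{\infty}\sum_{i+j=k} b_{j}\leq A_{\infty}\sum_{j=0}^{b} b_{j}=A_{\infty}B_{1},
\end{equation*}
where the middle inequality holds because, for a fixed $k$, the indices $j$ appearing in $\sum_{i+j=k}$ are distinct, so that inner sum is a partial sum of the digits of $B$ and hence at most $B_{1}=P(B,1)$. This establishes $\max_{k} c_{k}\leq A_{\infty}B_{1}$.

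The key step is then a carrying lemma: for any representation $N=\sum_{k} c_{k}10^{k}$ with nonnegative integer coefficients, the largest decimal digit of $N$ is at most $\max_{k} c_{k}$. I would prove this by a dichotomy rather than by tracking carries individually. If $\max_{k} c_{k}\geq 10$, the bound is immediate, since every decimal digit is at most $9$. If instead every $c_{k}\leq 9$, then no carry is ever generated (each partial sum stays below $10$), so the $c_{k}$ already \emph{are} the decimal digits of $N$, whence the largest digit equals $\max_{k} c_{k}$. Either way the largest digit is at most $\max_{k} c_{k}$.

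Applying the lemma to $N=A\times B=\sum_{k} c_{k}10^{k}$ and combining it with the coefficient bound yields $(AB)_{\infty}\leq \max_{k} c_{k}\leq A_{\infty}B_{1}$, as desired. I expect the only genuinely subtle point to be the carrying lemma: it is tempting to follow the carries term by term, but the observation that makes the argument painless is that carries can arise \emph{only} when some coefficient already reaches $10$—and in exactly that regime the trivial digit bound $\leq 9$ is already stronger than what the proposition demands.
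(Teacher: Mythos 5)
Your proposal is correct, and its first half is exactly the paper's computation: the convolution bound $\sum_{i+j=k} a_i b_j \leq A_{\infty}\sum_j b_j = A_{\infty}B_1$. Where you differ is that you then add a carrying lemma to pass from the formal coefficients $c_k$ of $P(A,x)P(B,x)$ to the actual decimal digits of $A\times B$; the paper's proof skips this entirely, simply asserting that ``the coefficient of $x^k$ in $P(AB,x)$ is $\sum_{i+j=k}a_ib_j$,'' which is literally true only when the multiplication is carry-free (i.e., only when $(A,B)$ is already a polynomial pair). Your dichotomy --- if some $c_k\geq 10$ the conclusion is trivial since every digit is at most $9 < A_{\infty}B_1$, and if all $c_k\leq 9$ there are no carries and the $c_k$ are the digits --- closes that gap cleanly and shows the proposition holds for \emph{all} $A,B$, not just in the carry-free regime where the paper later applies it (Proposition~\ref{prop:const1}, where $A_{\infty}B_1\leq 9$ forces the carry-free case anyway). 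So your argument is not just correct but strictly more careful than the one in the paper.
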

\begin{proof}
Write $P(A,x)=\sum_{i=0}^a a_i x^i,$ and $P(B,x)=\sum_{i=0}^b b_i x^i$. Then, the coefficient of $x^{k}$ in $P(AB,x)$ is
\begin{equation*}
\sum_{i+j=k} a_i b_j \le  A_\infty \sum_{j=0}^b b_j= A_\infty B_1 \text{.}
\end{equation*}
\end{proof}

A construction of polynomial pairs can be deduced from Proposition~\ref{prop:bound}.
\begin{proposition}\label{prop:const1}
Let $A,B \in \N$ with $ A_{\infty} B_{1} \le 9$. Then $(A,B)$ is a polynomial pair.
\end{proposition}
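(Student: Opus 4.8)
The plan is to deduce this directly from the third characterization in Proposition~\ref{prop:equiv}, which reduces the claim to a bound on the coefficients of the product polynomial $P(A,x)P(B,x)$. Writing $P(A,x)=\sum_{i=0}^{a} a_i x^i$ and $P(B,x)=\sum_{j=0}^{b} b_j x^j$, the coefficient of $x^k$ in $P(A,x)P(B,x)$ is $\sum_{i+j=k} a_i b_j$. It therefore suffices to show that every such coefficient is at most $9$.

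First I would apply the estimate already carried out in the proof of Proposition~\ref{prop:bound}. Replacing each $a_i$ by its upper bound $A_\infty$ and then dropping the constraint $i+j=k$ in favour of summing over all of the digits of $B$ gives, for every $k$,
\begin{equation*}
\sum_{i+j=k} a_i b_j \le A_\infty \sum_{j=0}^{b} b_j = A_\infty B_1 \text{.}
\end{equation*}
The hypothesis $A_\infty B_1 \le 9$ then forces each coefficient of $P(A,x)P(B,x)$ to be at most $9$, and Proposition~\ref{prop:equiv} Part 3) yields that $(A,B)$ is a polynomial pair.

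Since every step is an inequality that holds uniformly in $k$, there is no real computational obstacle here; the statement is essentially a corollary of Proposition~\ref{prop:bound}. The only point that requires care is keeping clear the distinction between the coefficients of the product polynomial $P(A,x)P(B,x)$, which is what the carry-free condition of Proposition~\ref{prop:equiv} concerns, and the digits of the integer $A \times B$, that is, the coefficients of $P(A \times B,x)$, which in general differ once a carry is present. The bound above controls the former, and that is exactly what is needed to invoke Proposition~\ref{prop:equiv}.
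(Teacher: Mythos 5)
Your proof is correct and follows essentially the same route as the paper, which simply combines the coefficient bound from Proposition~\ref{prop:bound} with Part 3) of Proposition~\ref{prop:equiv}. Your explicit remark that the bound must be read as controlling the coefficients of $P(A,x)P(B,x)$ rather than the digits of $A\times B$ is a careful (and welcome) clarification of the same argument, not a different one.
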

\begin{proof}
Combine Proposition~\ref{prop:equiv} Part 3) and Proposition~\ref{prop:bound}.
\end{proof}

Table~\ref{table:LNA} list downs all polynomial pairs $(A,B)$ with $A \times B = C$, $A \leq B$, and $C \leq C^{*} \leq 10^{4}$. Neither $A=A^{*}$ nor $B=B^{*}$ 
is allowed although $C = C^{*}$ is allowed. For each pair, there is a corresponding pair $(A^{*},B^{*})$ with $A^{*} \times B^{*} = C^{*}$. 
When $C$ is a palindrome, it is written in bold.
\begin{table}
\caption{The list of polynomial pairs $(A,B)$ with $A \leq B$,\newline
\hspace{\linewidth} neither $A=A^{*}$ nor $B=B^{*}$ is allowed, and $A \times B = C \leq C^{*} \leq 10^{4}$}
\label{table:LNA}
\centering
\begin{tabular}{ c  c  | c   c  | c   c | c   c | c  c }
\hline
$C$ & $(A,B)$ & $C$ & $(A,B)$ & $C$ & $(A,B)$ & $C$ & $(A,B)$ & $C$ & $(A,B)$ \\
\hline
$144 $ & $ ( 12 , 12 )$ & $1356 $ & $ ( 12 , 113 )$ & $2352 $ & $ ( 21 , 112 )$ &         & $ ( 24 , 112)$ & $3624 $ & $ ( 12 , 302)$ \\
$156 $ & $ ( 12 , 13 )$ & $1368 $ & $ ( 12 , 114 )$ & $2369 $ & $ ( 23 , 103 )$ & $2743 $ & $ ( 13 , 211)$ & $3648 $ & $ ( 12 , 304)$ \\
$168 $ & $ ( 12 , 14 )$ & $1428 $ & $ ( 14 , 102 )$ & $2373 $ & $ ( 21 , 113 )$ & $2769 $ & $ ( 13 , 213)$ & $3744 $ & $ ( 12 , 312)$ \\
$169 $ & $ ( 13 , 13 )$ & $1456 $ & $ ( 13 , 112 )$ & $2394 $ & $ ( 21 , 114 )$ & $\mathbf{2772}$ & $ ( 12 , 231)$ & $3768 $ & $ ( 12 , 314)$ \\
$\mathbf{252}$ & $ ( 12 , 21 )$ & $1464 $ & $ ( 12 , 122 )$ & $2436 $ & $ ( 12 , 203 )$ &         & $ ( 21 , 132)$ & $3864 $ & $ ( 12 , 322)$ \\

$273 $ & $ ( 13 , 21 )$ & $1469 $ & $ ( 13 , 113 )$ & $2448 $ & $ ( 12 , 204 )$ & $2793 $ & $ ( 21 , 133)$ & $3888 $ & $ ( 12 , 324)$ \\
$276 $ & $ ( 12 , 23 )$ & $1476 $ & $ ( 12 , 123 )$ &        & $ ( 24 , 102 )$ & $2796 $ & $ ( 12 , 233)$ & $3926 $ & $ ( 13 , 302)$ \\
$288 $ & $ ( 12 , 24 )$ & $1488 $ & $ ( 12 , 124 )$ & $2556 $ & $ ( 12 , 213 )$ & $2814 $ & $ ( 14 , 201)$ & $3984 $ & $ ( 12 , 332)$ \\
$294 $ & $ ( 14 , 21 )$ & $1568 $ & $ ( 14 , 112 )$ & $2562 $ & $ ( 21 , 122 )$ & $2873 $ & $ ( 13 , 221)$ & $4284 $ & $ ( 21 , 204)$ \\
$299 $ & $ ( 13 , 23 )$ & $1584 $ & $ ( 12 , 132 )$ & $2568 $ & $ ( 12 , 214)$ & $2892 $ & $ ( 12 , 241)$ &          & $ ( 42 , 102)$ \\

$384 $ & $ ( 12 , 32 )$ & $1586 $ & $ ( 13 , 122 )$ & $2576 $ & $ ( 23 , 112)$ & $2899 $ & $ ( 13 , 223)$ & $4386 $ & $ ( 43 , 102)$ \\
$1224 $ & $ ( 12 , 102 )$ & $1596 $ & $ ( 12 , 133 )$ & $2583 $ & $ ( 21 , 123)$ & $2954 $ & $ ( 14 , 211)$ & $4494 $ & $ ( 21 , 214)$ \\
$1236 $ & $ ( 12 , 103 )$ & $1599 $ & $ ( 13 , 123 )$ & $2599 $ & $ ( 23 , 113)$ & $3193 $ & $ ( 31 , 103)$ & $4669 $ & $ ( 23 , 203)$ \\
$1248 $ & $ ( 12 , 104 )$ & $2142 $ & $ ( 21 , 102 )$ & $2613 $ & $ ( 13 , 201)$ & $3264 $ & $ ( 32 , 102)$ & $4836 $ & $ ( 12 , 403)$ \\
$1326 $ & $ ( 13 , 102 )$ & $2163 $ & $ ( 21 , 103 )$ & $2639 $ & $ ( 13 , 203)$ & $3296 $ & $ ( 32 , 103)$ & $4899 $ & $ ( 23 , 213)$ \\

$1339 $ & $ ( 13 , 103 )$ & $2184 $ & $ ( 21 , 104 )$ & $2676 $ & $ ( 12 , 223)$ & $3468 $ & $ ( 34 , 102)$ & $4956 $ & $ ( 12 , 413)$ \\
$1344 $ & $ ( 12 , 112 )$ & $2346 $ & $ ( 23 , 102 )$ & $2688 $ & $ ( 12 , 224)$ & $3584 $ & $ ( 32 , 112)$ & $6496 $ & $ ( 32 , 203)$ \\
\hline
\end{tabular}
\end{table}

\section{Integers with Many Polynomial Pairs}\label{sec:four}
From Tables~\ref{table:NonPol} and~\ref{table:LNA} we notice that some $C \in \N$ can be the product of the elements of distinct polynomial pairs. 
Here we give a construction to show that some numbers can be the product of the elements of an arbitrarily large number of distinct polynomial pairs.

First, let us define a {\it repunit} $R(n)$ as the $n$-digit number whose digits are ones. The term, which abbreviates {\it repeated unit}, first appeared 
in~\cite[Ch.\ 11]{B}. More formally, 
\begin{equation*}
R(n)= \sum_{i=0}^{n-1} 10^{i} = \frac{10^{n}-1}{9}\text{.}
\end{equation*}	
Sequence $A004023$ in OEIS~\cite{OEIS} records the known values of $n$ for which $R(n)$ is prime.

\begin{theorem}
The repunit $R(2^{n})$ is the product of $n$ pairwise distinct positive integers. It can be expressed as the product $A \times B$ of
at least $M$ pairwise distinct polynomial pairs $(A,B)$ where $M$ is given by
\begin{equation}
M =
\begin{cases}
\sum_{j=1}^{k} {n \choose j}\text{,} & \text{ if } n=2k+1 \text{;}\\
\sum_{j=1}^{k-1} {n \choose j} + \frac{1}{2} {n \choose k}\text{,} & \text{ if } n=2k \text{.}
\end{cases}
\end{equation}
\end{theorem}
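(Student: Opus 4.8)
The plan is to exhibit an explicit factorization of $R(2^{n})$ into $n$ distinct ``building blocks'' and then form polynomial pairs by distributing these blocks between $A$ and $B$. First I would establish the factorization
\begin{equation*}
R(2^{n}) = \prod_{i=0}^{n-1}\left(10^{2^{i}}+1\right),
\end{equation*}
which follows from the telescoping identity $(x-1)\prod_{i=0}^{n-1}(x^{2^{i}}+1)=x^{2^{n}}-1$; dividing by $x-1$ gives $\prod_{i=0}^{n-1}(x^{2^{i}}+1)=\sum_{j=0}^{2^{n}-1}x^{j}$, and evaluating at $x=10$ yields $R(2^{n})$. The $n$ factors $10^{2^{i}}+1$ have pairwise distinct numbers of digits, hence are pairwise distinct, which settles the first assertion.

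Next, for a subset $S\subseteq\{0,1,\dots,n-1\}$ put $A_{S}:=\prod_{i\in S}(10^{2^{i}}+1)$. I would show that $P(A_{S},x)=\prod_{i\in S}(x^{2^{i}}+1)$ and that this polynomial has all coefficients in $\{0,1\}$: expanding the product amounts to choosing, for each $i\in S$, either $x^{2^{i}}$ or $1$, and since the $2^{i}$ are distinct powers of two, the exponents $\sum_{i\in T}2^{i}$ (over $T\subseteq S$) are all distinct, so no carries occur. Now for any partition $\{S,S^{c}\}$ set $A=A_{S}$ and $B=A_{S^{c}}$. Then
\begin{equation*}
P(A,x)\,P(B,x)=\prod_{i=0}^{n-1}(x^{2^{i}}+1)=\sum_{j=0}^{2^{n}-1}x^{j}=P(R(2^{n}),x),
\end{equation*}
whose coefficients all equal $1\le 9$. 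By Proposition~\ref{prop:equiv}, the pair $(A,B)$ is polynomial, so \emph{every} partition of the block set into two parts produces a polynomial pair with product $R(2^{n})$.

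It remains to count the distinct pairs. I would first note that $A_{S}$ determines $S$: the degree of $P(A_{S},x)$ is $\sum_{i\in S}2^{i}$, so the digit length of $A_{S}$ recovers this integer, whose binary expansion is exactly the indicator of $S$. Hence distinct partitions $\{S,S^{c}\}$ yield distinct unordered pairs $\{A_{S},A_{S^{c}}\}$, so the map from partitions to pairs is injective. Discarding the trivial partition $\{\emptyset,\{0,\dots,n-1\}\}$, which gives the factor $1$, I would count the remaining unordered partitions into two nonempty parts by the size $j$ of the smaller part: for $1\le j<n/2$ each choice of a $j$-subset fixes the partition and contributes $\binom{n}{j}$, while when $n=2k$ and $j=k$ both parts have equal size and each partition is counted twice, contributing $\tfrac12\binom{n}{k}$. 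Summing gives precisely the piecewise value $M$, and since $\sum_{i=0}^{n-1}2^{i}=2^{n}-1$ is odd one always has $A_{S}\neq A_{S^{c}}$, so all $M$ pairs are genuine.

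The genuine content is the factorization together with the binary-support observation, which is what forces every subset product to be carry-free; once that is in hand, membership in the class of polynomial pairs is immediate from Proposition~\ref{prop:equiv}. The main thing to get right is the bookkeeping in the counting step, and in particular the even case, where the middle term $\binom{n}{k}$ must be halved to avoid double-counting the two complementary $k$-subsets. This halving is exactly the source of the piecewise split in the formula for $M$, and I expect it to be the only delicate point in an otherwise routine argument.
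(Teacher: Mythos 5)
Your proposal is correct and follows essentially the same route as the paper: the factorization $R(2^{n})=\prod_{i=0}^{n-1}(10^{2^{i}}+1)$ followed by counting unordered partitions of the $n$ factors into two nonempty blocks, with the halved middle binomial in the even case. Your version is in fact somewhat more careful than the paper's, since you justify explicitly (via the uniqueness of binary representations) why every subset product is carry-free and why distinct partitions yield distinct pairs, points the paper passes over with the remark that $R(a)_{\infty}=1$.
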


\begin{proof}
It is clear that $R(2)=10^{1}+1$ and $R(4)=R(2)(100+1)=(10^{1}+1)(10^{2}+1)$. Using the difference of squares, we can inductively write
\begin{align*}
R(2^{n}) & = \frac{1}{9} (10^{2^{n}}-1) = \frac{1}{9}(10^{2^{n-1}}-1)(10^{2^{n-1}}+1)\\
         & =R(2^{n-1}) (10^{2^{n-1}}+1) = \prod_{i=0}^{n-1} (10^{2^i}+1)\text{.}
\end{align*}
This establishes the first assertion. Moreover, all of the multiplications can be performed without carry 
since $R(a)_\infty =1$, for all integers $a\ge 1$.

Since there are $n$ distinct factors, we can group them into two disjoint nontrivial sets $\mathcal{A}$ and $\mathcal{B}$. 
Let $A$ be the product of the elements in $\mathcal{A}$ and $B$ analogously based on $\mathcal{B}$. Since we want to avoid repetition, 
two cases based on the parity of $n$ need to be considered.

When $n=2k+1$, the set $\mathcal{A}$ has $j$ elements with $1 \leq j \leq k=(n-1)/2$. The remaining $n-j$ elements not chosen 
for $\mathcal{A}$ automatically form the set $\mathcal{B}$. Thus we have $M=\sum_{j=1}^{k} {n \choose j}$.

When $n=2k$, we can do similarly for $1 \leq |\mathcal{A}| \leq k-1$ but we need to treat the case of $|\mathcal{A}|=|\mathcal{B}|=k$ 
with more care. To avoid forming repetitive pairs, we halve the count. In total, $M=\sum_{j=1}^{k-1} {n \choose j} + \frac{1}{2} {n \choose k}$. 
\end{proof}

\section{Reversal Multiplication}\label{sec:five}
A popular way to obtain palindromes is to multiply a number by its reversal.
This is called {\it reversal multiplication} in~\cite{won} and the numbers that give palindromes 
in that way form Sequence $A062936$ in OEIS~\cite{OEIS}. This recipe always works with polynomial pairs 
as the next result shows.

\begin{proposition}\label{prop:pal}
If $(A,A^{*})$ is a polynomial pair, then $A \times A^{*}$ is always a palindrome.
\end{proposition}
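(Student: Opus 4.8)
The plan is to show that the product polynomial $Q(x) := P(A,x)\,P(A^*,x)$ is \emph{self-reciprocal}, i.e.\ equal to its own reciprocal $Q^*$, and then to read off from the polynomial-pair hypothesis that the base-$10$ digits of $C := A \times A^*$ coincide with the coefficients of $Q$. A self-reciprocal coefficient sequence is precisely a palindromic digit string, so $C$ is a palindrome.

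First I would record the one identity that drives everything. Writing $A = \sum_{i=0}^{a} a_i 10^i$, the reciprocal satisfies $P^*(A,x) = x^a P(A,1/x)$, and since $P(A^*,x) = P^*(A,x)$ we obtain $P(A^*,x) = x^a P(A,1/x)$. Because $(A^*)^* = A$, the same identity applied to $A^*$ gives $x^a P(A^*,1/x) = P(A,x)$. With these in hand I would compute the reciprocal of $Q$. Since $Q$ has degree $2a$ (its leading coefficient being $a_a a_0 \neq 0$), its reciprocal is $Q^*(x) = x^{2a} Q(1/x)$, and
\[
x^{2a} Q(1/x) = \bigl(x^a P(A,1/x)\bigr)\bigl(x^a P(A^*,1/x)\bigr) = P(A^*,x)\,P(A,x) = Q(x).
\]
Thus $Q = Q^*$, which means the coefficient of $x^k$ in $Q$ equals the coefficient of $x^{2a-k}$ for every $k$.

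Next I would invoke the hypothesis that $(A,A^*)$ is a polynomial pair, which by definition gives $Q(x) = P(A,x)\,P(A^*,x) = P(A \times A^*,x) = P(C,x)$. Hence the base-$10$ digits of $C$ are literally the coefficients of $Q$: by Proposition~\ref{prop:equiv} these coefficients are all at most $9$, so no carrying shifts or collapses them into neighbouring positions. The symmetry $Q = Q^*$ established above then says that the digit string of $C$ reads the same forwards and backwards, that is, $C = C^*$, which is the claim.

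The only point needing care is the bookkeeping at the two ends of the digit string: one must confirm that $Q$ genuinely has degree $2a$ and nonzero constant term, so that the symmetric coefficient vector really corresponds to a digit string free of spurious leading or trailing zeros. This reduces to checking $a_0 \neq 0$, which holds precisely because the reversal is taken to be the standard involution on integers with no trailing zero. I expect this to be the main, though minor, obstacle; the substance of the argument is the clean algebraic identity $Q = Q^*$.
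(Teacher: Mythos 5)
Your proof is correct and follows essentially the same route as the paper, whose entire argument is the one-line observation that $P(A^*,x)=P^*(A,x)$ and that $P(A,x)P^*(A,x)$ is self-reciprocal; you simply carry out that computation explicitly and spell out why the polynomial-pair hypothesis lets the symmetric coefficients be read off as the digits of $A\times A^*$. Your closing remark about trailing zeros ($a_0\neq 0$) is a genuine hypothesis the paper leaves implicit --- e.g.\ $(120,21)$ is a polynomial pair with non-palindromic product $2520$ --- so flagging it is a small but worthwhile addition.
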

\begin{proof}
It suffices to confirm that $P(A^{*},x)=P^*(A,x)$ and that $P(A,x) P^*(A,x)$ is a self-reciprocal polynomial.
\end{proof}

It is observed in~\cite{won} that all elements $>3$ in Sequence $A062936$ only have digits $0,1$, and $2$. This is easy to show in the polynomial pair case and correlates with an observation made by David Wilson\footnote{https://oeis.org/A062936} on July 6, 2001 stating 
that said sequence includes positive integers not ending in $0$ whose sum of squares of the digits is $\leq 9$.

\begin{proposition}\label{prop:CS}
If $(A,A^{*})$ is a polynomial pair, then the sum of the squares of the digits of $A$ is $\leq 9$. 
In particular, if $A>9$, we have  $A_\infty \leq 2$. Conversely, if the sum of the squares of the digits of $A$ is $\leq 9$, then $(A,A^{*})$ is a polynomial pair.
\end{proposition}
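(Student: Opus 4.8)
The plan is to pull everything back to Proposition~\ref{prop:equiv} Part 3) by means of the identity $P(A^{*},x)=P^{*}(A,x)$, which simply expresses that reversing the digit string of $A$ yields the reciprocal polynomial. Under this identity the pair $(A,A^{*})$ is polynomial exactly when every coefficient of $P(A,x)\,P^{*}(A,x)$ is at most $9$. First I would expand this product: writing $P(A,x)=\sum_{i=0}^{a}a_{i}x^{i}$, the coefficient of $x^{a+m}$ in $P(A,x)\,P^{*}(A,x)$ is $\sum_{i-j=m}a_{i}a_{j}$. The decisive case is $m=0$, where the central coefficient of $x^{a}$ equals $\sum_{i=0}^{a}a_{i}^{2}$, precisely the sum of the squares of the digits of $A$.

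With this in hand the forward direction is immediate: if $(A,A^{*})$ is polynomial then the central coefficient is at most $9$, so $\sum_{i}a_{i}^{2}\le 9$. To extract $A_{\infty}\le 2$ for $A>9$, I would observe that a digit of value $3$ already contributes $9$ to this sum and so forces every other digit to be $0$, while a digit of value $4$ or more is outright impossible; hence the only way to achieve $A_{\infty}\ge 3$ is $A=3\cdot 10^{i}$, which for $A>9$ ends in a zero. Discarding this degenerate single-nonzero-digit family (equivalently, the numbers ending in $0$ that Wilson's observation excludes), we are left with $A_{\infty}\le 2$.

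The converse is the substantive part and the step I expect to be the main obstacle, because controlling only the central coefficient is not enough; I must bound all of the shifted coefficients $\sum_{i-j=m}a_{i}a_{j}$ as well. The plan is to apply Cauchy--Schwarz shift by shift: for every $m$,
\[
\sum_{i-j=m}a_{i}a_{j}=\sum_{j}a_{j+m}a_{j}\le\Bigl(\sum_{j}a_{j+m}^{2}\Bigr)^{1/2}\Bigl(\sum_{j}a_{j}^{2}\Bigr)^{1/2}\le\sum_{i}a_{i}^{2}\le 9,
\]
since each sum under a root is a sub-sum of $\sum_{i}a_{i}^{2}$ and is therefore itself at most $9$. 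Thus every coefficient of $P(A,x)\,P^{*}(A,x)$ is at most $9$, and Proposition~\ref{prop:equiv} Part 3) delivers the conclusion that $(A,A^{*})$ is polynomial. It is worth noting that the same elementary bound could instead be run through AM--GM term by term, which avoids square roots and keeps the whole estimate inside the integers.
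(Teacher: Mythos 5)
Your proposal is correct and takes essentially the same route as the paper: identify the central coefficient of $P(A,x)\,P^{*}(A,x)$ with the sum of the squares of the digits and use Cauchy--Schwarz to bound every shifted coefficient by that central one, then conclude via Proposition~\ref{prop:equiv} Part 3). (You are in fact a little more careful than the paper on the ``$A_{\infty}\le 2$'' clause, where the exceptional single-nonzero-digit family $A=3\cdot 10^{i}$ does need to be set aside.)
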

\begin{proof}
Write $P(A,x)=\sum_{i=0}^{d} a_{i} x^{i}$. By Cauchy-Schwarz inequality, for $0 \leq k \leq d$,  
\begin{equation*} 
\sum_{l=0}^{k} a_{l} a_{d-k+l} \leq \sum _{i=0}^{d} a_{i}^{2}\text{.}
\end{equation*}
The left hand side is the coefficient of $x^{k}$ while the right hand side is the coefficient of $x^{d}$ in $P(A \times A^{*},x)$. 
Thus, by Proposition~\ref{prop:pal}, 
\begin{equation}\label{eq:ineq}
(A \times A^{*})_{\infty} = \sum_{i=0}^{d} a_{i}^{2} \text{,}
\end{equation}
which is the sum of the squares of the digits of $A$. This establishes the first statement from which follows that if $A$ has at least two nonzero digits, then none can be $ \geq 3$.

The converse follows from Equation (\ref{eq:ineq}) and Proposition~\ref{prop:equiv}.
\end{proof}

We have generated a list of elements $A <10^{9}$ of Sequence $A062936$ and verified that the sum of the squares of the digits of $A$ is bounded above by $9$. Applying the converse part of Proposition~\ref{prop:CS}, we are led to the following conjecture.
\begin{conjecture}\label{2}
If $A \times A^{*}$ is a palindrome, then $(A,A^{*})$ is a polynomial pair.
\end{conjecture}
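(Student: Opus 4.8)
The plan is to reduce the conjecture to a purely combinatorial statement about carrying in a symmetric coefficient sequence, and then to exploit the fact that carrying always propagates toward more significant digits. Write $A=\sum_{i=0}^{d}a_i10^i$ with $a_d\neq 0$, and recall from Proposition~\ref{prop:pal} that $P(A,x)P^*(A,x)$ is self-reciprocal; denote by $c_k$ its coefficient of $x^k$, so that $c_k=c_{2d-k}$ and, as an integer value, $A\times A^*=\sum_{k=0}^{2d}c_k10^k$. The Cauchy--Schwarz computation behind Proposition~\ref{prop:CS} shows that $c_k\le c_d=\sum_i a_i^2$ for every $k$, so the central coefficient is the maximum. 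Consequently a carry occurs in the multiplication if and only if $c_d\geq 10$, and the conjecture is equivalent to the implication: if $\sum_i a_i^2\geq 10$, then the integer $N:=\sum_k c_k10^k$ is not a palindrome.

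To prove this contrapositive I would assume $N$ is a palindrome and track the decimal digits $D_k$ of $N$ produced from the $c_k$ by the standard carrying procedure, recording the carry $\gamma_k$ into position $k$. Because all carries move upward, the carry $\gamma_k$ into a position $k<d$ is generated only by the coefficients below it, whereas the carry into the mirror position $2d-k$ has had the entire cascade from below to accumulate; since $c_k=c_{2d-k}$, this asymmetry is exactly what should destroy the digit symmetry $D_k=D_{2d-k}$ (or its analogue, once the length of $N$ is pinned down, since a carry out of the top shifts the centre of the palindrome). As a warm-up one checks the single-carry case: if $c_d\geq 10$ but every other $c_k\leq 9$ and the cascade above the centre is short, then $N$ has exactly $2d+1$ digits, its centre sits at position $d$, the digit immediately below the centre equals the unchanged value $c_{d-1}=c_{d+1}$, while the mirror digit immediately above the centre has either been increased by the carry or collapsed to $0$ through a run of $9$s, so that $D_{d-1}\neq D_{d+1}$, contradicting palindromicity.

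The main obstacle will be the general case in which several coefficients exceed $9$ on both sides of the centre and long cascades of $9$s interact, since then carries also enter the lower half and the simple mirror comparison is no longer available. Here one must show that the surplus carry entering the upper half can never be matched, digit for digit, by the carries entering the lower half. I would attempt this either by strong induction on $d$, peeling off the equal outermost digits $D_0=D_L$ of the palindrome and reducing to a shorter self-reciprocal instance, or by isolating the lowest index $k_0$ with $c_{k_0}\geq 10$ and comparing the digit strings on the two sides of the centre up to the point where the first cascade differs. The extra leverage that I expect to be essential, and that distinguishes these $c_k$ from arbitrary symmetric nonnegative sequences, is the strict dominance of the central autocorrelation coefficient together with the constraint that the $c_k$ genuinely arise as autocorrelations of a digit string; turning this structural advantage into a proof that no cascade can conspire to restore the palindrome is precisely the step that remains open.
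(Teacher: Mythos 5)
This statement is Conjecture~\ref{2}: the paper does not prove it, explicitly leaves it open, and offers a prize for a proof or counterexample. Your proposal is likewise not a proof --- you say yourself that the decisive step ``remains open'' --- so there is nothing here that settles the statement. What you do establish is correct and matches the paper's partial results: the reduction via Cauchy--Schwarz to ``if $\sum_i a_i^2 \geq 10$ then $A\times A^*$ is not a palindrome'' is exactly the content of Proposition~\ref{prop:CS}, and your ``warm-up'' mirror comparison of carries at positions $\ell$ and $2d-\ell$ is essentially the paper's proof of its one proved special case (an $n$-digit $A$ whose product $A\times A^*$ is a $(2n-1)$-digit palindrome), where the induction on $\gamma_\ell$ and $\gamma_{2n-\ell-1}$ works precisely because the digit-length hypothesis pins the centre and kills the top carry. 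Even in your warm-up the inequality $D_{d-1}\neq D_{d+1}$ is not automatic: the carry $\lfloor c_d/10\rfloor$ can exceed $9$ and can be $\equiv 0 \pmod{10}$, in which case the two mirror digits agree and the discrepancy must be chased further up the cascade.

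The deeper problem is that the structural leverage you propose to exploit --- self-reciprocality of the coefficient sequence $c_k$, strict dominance of the central autocorrelation coefficient, and the fact that carries propagate only upward --- is entirely base-independent. But the paper exhibits counterexamples to the base-$b$ analogue of this conjecture in bases $3,4,5,7,8,9,11$ (Table~\ref{table:LAAstar} and Proposition~\ref{prop:imp}); for instance, in base $5$ one has $314\times 413 = 242242$, a palindrome, even though the pair is not polynomial. So in those bases the carry cascade \emph{does} conspire to restore the palindrome, while all of your proposed ingredients still hold verbatim. Any argument built solely on them would prove the statement in every base and is therefore doomed. A successful completion of your plan must isolate and use something specific to base $10$ (the paper notes that $b$ even with $b+1$ square-free is necessary but not sufficient), and identifying that ingredient is exactly the open problem.
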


To emphasize that we make Conjecture~\ref{2} for base $b=10$ only, we observe the following counterexamples for $b \neq 10$. 

In base $2$, we have $11 \times 11= 1001$. More generally, for any integer $l \geq 2$,
\begin{multline}\label{eq:counter}
11 \underbrace{00\cdots0}_{2l} 10101 \underbrace{00\cdots0}_{2l-1} 11 \times 11 \underbrace{00\cdots0}_{2l-1} 10101 \underbrace{00\cdots0}_{2l} 11\\
=  1001 \times 2^{8l+12}+ 10111101 \times 2^{6l+7} + 1001001001 \times 2^{4l+3 }+10111101 \times 2^{2l+1}  + 1001 \text{,}
\end{multline}
which can be seen to be a palindrome. Using $2l+1$, instead of $2l-1$, also works. Our computation reveals that there are no other counterexamples 
with $A$ having less than $20$-digit base $2$ representation.

In base $4$, the only counterexample with $A$ having less than 10-digit representation is $2232213 \times 3122322 = 21111033011112$. The next counter 
example, if exists, must be a considerably large number. 

Table~\ref{table:LAAstar} gives the counterexamples we found for base $b \in \{3,4,5,7,8,9,11\}$.  We have not been able to find 
counterexamples in either base $6$ or base $10$.
\begin{table}[h!t!]
\caption{Examples of $A$ and $A\times A^*$ where $(A,A^{*})$ is not a polynomial pair\newline
\hspace{\linewidth}for $A \le A^*$ in bases $b \in \{3,4,5,7,8,9,11\}$.\newline
\hspace{\linewidth}In base $11$, $a$ stands for $10$}
\label{table:LAAstar}
\centering
\begin{tabular}{ c   c   c  | c   c   c  }
\hline
Base & $A$ & $A \times A^{*}$ & Base & $A$ & $A \times A^{*}$  \\
\hline
$3$ & $ 2 $          & $ 11 $                  & $8$ & 	$ 47 $   & $ 4444 $ \\
    & $ 202 $        & $ 112211 $              &     &	$ 303 $  & $ 112211 $ \\
    & $ 2002 $       & $ 11022011 $            &     & 	$ 306 $  & $ 225522 $ \\
    & $ 20002 $      & $ 1100220011 $          &     & 	$ 333 $  & $ 135531 $ \\
    & $ 200002 $     & $ 110002200011 $        &     & 	$ 3003 $ & $ 11022011 $ \\

    & $ 201102 $     & $ 111221122111 $        &     &	$ 3006 $ & $ 22055022 $ \\
    & $ 2000002 $    & $ 11000022000011 $      &     & 	$ 3033 $ & $ 12244221 $ \\
    & $ 20000002 $   & $ 1100000220000011 $    &     & 	$ 3116 $ & $ 23300332 $ \\
    & $ 20011002 $   & $ 1101211111121011 $    &     & 	$ 3306 $ & $ 24377342 $ \\
    & $ 200000002 $  & $ 110000002200000011 $  &     & 	$ 3333 $ & $ 13577531 $ \\

    & $ 2000000002 $ & $ 11000000022000000011 $ & 	 &  $ 30003 $ & $ 1100220011 $ \\
    & $ 2000110002 $ & $ 11001210111101210011 $ & 	 &  $ 30006 $ & $ 2200550022 $ \\
$4$ & $ 2232213 $    & $ 21111033011112 $       &    &  $ 30033 $ & $ 1211441121 $ \\
$5$ & $ 314 $        & $ 242242 $               &    &  $ 30303 $ & $ 1122332211 $ \\
    & $ 22033 $      & $ 1334334331 $          & $9$ &  $ 516 $   & $ 350053 $ \\

    & $ 220033 $     & $ 133133331331 $         &     & $ 44055 $   & $ 2667557662 $ \\							
    & $ 2200033 $    & $ 13310333301331 $       &     & $ 440055 $  & $ 266255552662 $ \\
    & $ 2301123 $    & $ 14000211200041 $       &     & $ 2403555 $ & $ 14682744728641 $ \\
    & $ 22000033 $   & $ 1331003333001331 $     &     & $ 4400055 $ & $ 26620555502662 $ \\
    & $ 23410123 $   & $ 1423123003213241 $    & $11$ & $6$         & $ 33 $ \\

    & $ 24200303 $   & $ 1344343003434431 $     &  & $ 66 $  & $ 3993 $ \\							
$7$ & $ 4 $          & $ 22 $                   &  & $ 77 $  & $ 5335 $ \\
    & $ 44 $         & $ 2662 $                 &  & $ 374 $ & $ 161161 $ \\
    & $ 55 $         & $ 4444 $                 &  & $ 419 $ & $ 350053 $ \\
    & $ 404 $        & $ 224422 $               &  & $ 606 $ & $ 336633 $ \\

    & $ 4004 $       & $ 22044022 $             &  	& $ 6006 $  & $ 33066033  $ \\
    & $ 4114 $       & $ 23300332$              &  	& $ 21896 $ & $ 139a00a931 $ \\
    & $ 25124 $      & $ 1456446541 $           &  	& $ 33088 $ & $ 1669999662 $ \\
    & $ 40004 $      & $ 2200440022 $           &  	& $ 60006 $ & $ 3300660033 $ \\
    & $ 400404 $     & $ 222426624222 $         &  	& $ 60606 $ & $ 3366996633 $ \\

    & $ 403304 $     & $ 223652256322 $         & 	& $ 63328 $  & $ 4779559774 $ \\
    & $ 404004 $     & $ 222426624222 $         & 	& $ 283306 $ & $ 156695596651 $ \\
    & $ 4000004 $    & $ 22000044000022 $       &  	& $ 330088 $ & $ 266279972662 $ \\
$8$ & $ 3 $          & $ 11 $                   &  	& $ 391744 $ & $ 15a484484a51 $ \\
    & $ 6 $          & $ 44 $                   &  	& $ 441739 $ & $ 379373373973 $ \\

    & $ 33 $         & $ 1331 $                 &   & $ 600006 $ & $ 330006600033 $ \\
    & $ 36 $         & $ 2772 $                 &   & $ 600606 $ & $ 333639936333 $ \\
\hline
\end{tabular}
\end{table}

\begin{proposition}\label{prop:imp}
There are infinitely many values of $b$ for which the analogue of Conjecture~\ref{2} in base $b$ is false.
\end{proposition}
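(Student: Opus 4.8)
The plan is to produce, for infinitely many bases $b$, a single explicit integer $A$ witnessing that the base-$b$ analogue of Conjecture~\ref{2} fails: $A \times A^{*}$ will be a palindrome in base $b$ while $(A, A^{*})$ is not a polynomial pair. The cleanest witnesses are single-digit numbers, which are automatically palindromes, so that $A = A^{*}$; the base-$3$ and base-$8$ rows of Table~\ref{table:LAAstar} (namely $A = 2$ and $A = 3$) are instances of exactly this phenomenon, and I would simply extend the pattern to an infinite family.

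Concretely, for each integer $d \ge 2$ I would set $b = d^{2} - 1$ and take $A = d$. First I would check that $d$ is a legitimate base-$b$ digit, i.e.\ that $1 \le d \le b - 1 = d^{2} - 2$; this is the inequality $(d-2)(d+1) \ge 0$, valid for all $d \ge 2$. Since $A$ is a single digit we have $A = A^{*}$ and $A \times A^{*} = d^{2} = b + 1$, whose base-$b$ representation is the two-digit string $11_{b}$, visibly a palindrome. Next I would verify that $(A, A^{*})$ is not polynomial: here $P(A,x) = P^{*}(A,x) = d$, so $P(A,x)\,P^{*}(A,x) = d^{2} = b+1$ is a constant polynomial whose single coefficient exceeds the largest admissible digit $b-1$. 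By the evident base-$b$ form of Proposition~\ref{prop:equiv}, the product $d \times d$ overflows into a carry and the pair fails to be polynomial.

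Finally, since $b = d^{2} - 1$ is strictly increasing in $d$, the bases $3, 8, 15, 24, \ldots$ are pairwise distinct, so infinitely many bases admit a counterexample, which is the claim. I do not expect a genuine obstacle here; the only points demanding care are the two elementary inequalities above, that $A = d$ stays a valid single digit and that $d^{2}$ genuinely spills over into the $b$'s place rather than remaining single-digit, both of which hold precisely because $b + 1$ sits just above $b$. Should one insist on witnesses $A$ that are not themselves palindromes, the same mechanism can be adapted to two-digit $A$ whose product with $A^{*}$ has a palindromic digit string but a forced carry, though the single-digit family already suffices.
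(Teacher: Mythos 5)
Your proof is correct and uses essentially the same idea as the paper's first case: for bases of the form $b = r^{2}-1$ the single digit $A=r$ satisfies $r \times r = 11_{b}$, a palindrome obtained with a carry, so $(A,A^{*})$ is palindromic but not polynomial, and these bases are infinite in number. The paper's proof goes on to exhibit further families of bases ($b=4k\pm 1$) and infinitely many witnesses within each base, but that extra material is not needed for the proposition as stated.
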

\begin{proof}
First, consider the base $b$ such that $b=r^{2}-1$ for $2 \le r \in \N$. Writing in base $b$,
$r \times r = 11$, and for any non-negative integer $j$,
\begin{align*}
r \underbrace{00\cdots0}_{j+1} r \times r \underbrace{00\cdots0}_{j+1} r & = (r \times b^{j+2} + r)^{2} = (r^{2} \times b^{2j+4}) + (2 \times r^{2} \times b^{j+2}) + r^{2} \\
&= 11 \underbrace{00\cdots0}_{j}22 \underbrace{00\cdots0}_{j} 11 \text{.}
\end{align*}
There are obviously infinitely many such bases $b$.

For any base $b$ of the form $b = 4k- 1$, 
\begin{equation}\label{eq:two}
(2k) \times (2k) = 4k^{2}= (k \times b) + k = kk
\end{equation}
in base $b$. More generally, in the said base, one can easily verify, by using Equation~\ref{eq:two}, that 
\begin{align*}
((2k) \underbrace{00\cdots0}_{j+1} (2k))^{2} = ((2k \times b^{j+2}) + 2k)^{2} &= (4k^{2} \times b^{2j+4}) + (2 \times 4k^{2} \times b^{j+2}) + 4k^{2} \\
                                             &= kk \underbrace{00\cdots0}_{j} (2k) (2k) \underbrace{00\cdots0}_{j} kk \text{.}
\end{align*}

When the base $b$ is of the form $b=4k+1$, we can write
\begin{equation}\label{eq:twoplus}
(2k) \times (2k+1) = k \times (4k+1) + k = (k \times b) + k = kk \text{.}
\end{equation}
Using Equation~\ref{eq:twoplus}, one gets
\begin{align*} 
& [(2k)(2k) 00 (2k+1)(2k+1)] \times [(2k+1)(2k+1)00(2k)(2k)] \\ 
&= [ b^{4} \times 2k \times (b+1) +  (2k+1) \times (b+1) ] \times [ b^{4} \times (2k+1) \times (b+1) + 2k \times (b + 1) ] \\
&= kk + [(2k)(2k) \times b] + [kk \times b^{2}] + [(2k)(2k+1) \times b^{4}] + [101 \times b^{5}] + [(2k)(2k+1) \times b^{6}] \\
& + [kk \times b^{8}] + [(2k)(2k) \times b^{9}] + [kk \times b^{10}] \\
&= k(3k)(3k)k (2k+1)(2k+1)(2k+1)(2k+1) k(3k)(3k)k \text{.}
\end{align*}
In fact, one can obtain a slightly more general result since, for any non-negative integer $j$,
\begin{multline*} 
(2k)(2k) \underbrace{00\cdots0}_{j+2} (2k+1)(2k+1) \times (2k+1)(2k+1)\underbrace{00\cdots0}_{j+2}(2k)(2k) \\ 
= k(3k)(3k)k\underbrace{00\cdots0}_{j} (2k+1)(2k+1)(2k+1)(2k+1) \underbrace{00\cdots0}_{j} k(3k)(3k)k \text{.} 
\end{multline*}
\end{proof}

Thus, a necessary but insufficient condition for the analogue of Conjecture~\ref{2} in base $b$ to hold is for $b$ to be even and for $b+1$ to be square-free.

\begin{remark}
Let $(A,B)$ be a palindromic pair. If either $A$ or $B$ is itself a palindrome, then we cannot conclude immediately that $(A,B)$ is a polynomial pair. Indeed, in many cases, for example, when $A=121$ and $B=A^{*}=A$, the pair $(A,B)$ is both palindromic and polynomial. Yet, as shown by the pairs listed in Table~\ref{table:NonPol}, a palindromic pair may fail to be polynomial when either $A$ or $B$ is a palindrome.

Conjecture~\ref{2} posits that, regardless of whether $A$ itself is a palindrome, so long as $A \times A^{*}$ is a palindrome, then $(A,A^{*})$ is polynomial. Thus, this conjecture does not follow from Conjecture~\ref{conj:main}. If, however, we add the condition that $A \neq A^{*}$, then a positive answer to Conjecture~\ref{conj:main} settles this modified version of Conjecture~\ref{2} since, if $A \times A^{*}$ is a palindrome, then $(A,A^{*})$ is of course a palindromic pair.

Note that Proposition~\ref{prop:imp} still holds if we use the base $b$ analogue for the modified version of Conjecture~\ref{2} using only bases $b=4k+1$ in the proof. In this case, removing all entries in Table~\ref{table:LAAstar} having $A=A^{*}$ provides analogous examples.
\end{remark}

To end this section we prove a special case of Conjecture~\ref{2}.
\begin{proposition}
If $A$ is an $n-$digit number and $A \times A^{*}$ is a $(2n-1)$-digit palindrome then $(A,A^{*})$ is a polynomial pair.
\end{proposition}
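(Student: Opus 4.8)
The plan is to translate everything into the carry dynamics of the schoolbook multiplication of $A$ by $A^{*}$ and to show that, under the stated hypotheses, every carry is forced to be zero; by Proposition~\ref{prop:equiv} this is exactly the assertion that $(A,A^{*})$ is a polynomial pair. Write $d=n-1$, so that $P(A,x)=\sum_{i=0}^{d}a_{i}x^{i}$ with $a_{d}\ge 1$, and set $c_{m}:=\sum_{i+k=m}a_{i}a_{d-k}$, the coefficient of $x^{m}$ in $P(A,x)P^{*}(A,x)$, so that $A\times A^{*}=\sum_{m=0}^{2d}c_{m}10^{m}$. Since the polynomial $P(A,x)P^{*}(A,x)$ is self-reciprocal, as observed in the proof of Proposition~\ref{prop:pal}, one has the symmetry $c_{m}=c_{2d-m}$. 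Introduce the carry sequence by $t_{-1}:=0$ and $c_{m}+t_{m-1}=D_{m}+10\,t_{m}$, where $D_{m}\in\{0,\ldots,9\}$ is the actual $m$-th decimal digit of $A\times A^{*}$ and $t_{m}\ge 0$. The identity to keep in view is $c_{m}-D_{m}=10\,t_{m}-t_{m-1}$, and the goal is to prove $t_{m}=0$ for every $m$, which is equivalent to $c_{m}\le 9$ for all $m$.

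First I would clear the boundary. The palindrome hypothesis forces $a_{0}\ge 1$: if $a_{0}=0$ then $10\mid A\times A^{*}$, so $D_{0}=0$, whence by palindromicity the leading digit would also vanish, which is impossible. Consequently $c_{0}=c_{2d}=a_{0}a_{d}\ge 1$. The hypothesis that $A\times A^{*}$ has exactly $2d+1$ digits then gives $c_{2d}\le 9$, for otherwise $A\times A^{*}\ge c_{2d}10^{2d}\ge 10^{2d+1}$ would have at least $2d+2$ digits; hence $t_{0}=0$ and $D_{0}=c_{0}$. The same digit count yields $t_{2d}=0$, so the top digit is $D_{2d}=c_{2d}+t_{2d-1}$; comparing it with $D_{0}=c_{0}=c_{2d}$ through the palindrome relation $D_{2d}=D_{0}$ forces $t_{2d-1}=0$. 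These computations seed both ends of the carry sequence.

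The heart of the argument is a two-ended induction proving $t_{j}=t_{2d-1-j}=0$ for $j=0,1,\dots$. Assuming this for all indices up to $j$, I would compute $c_{j+1}-D_{j+1}=10\,t_{j+1}-t_{j}=10\,t_{j+1}$, while the reflected index gives $c_{2d-1-j}-D_{2d-1-j}=10\,t_{2d-1-j}-t_{2d-2-j}=-t_{2d-2-j}$. Because $c_{j+1}=c_{2d-1-j}$ by self-reciprocity and $D_{j+1}=D_{2d-1-j}$ by palindromicity, these two quantities are equal, so $10\,t_{j+1}=-t_{2d-2-j}$; as both carries are nonnegative, both must vanish, advancing the induction. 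Running it to the middle forces every $t_{m}=0$, hence $c_{m}=D_{m}\le 9$, and Proposition~\ref{prop:equiv} completes the proof. The step I expect to be the crux is precisely this induction: the delicate point is that palindromicity only relates the \emph{reduced} digits $D_{m}$, so one must feed it back into the carry recurrence from both ends simultaneously, and the decisive leverage is the sign obstruction, a nonnegative carry being equated to a nonpositive one, which is what actually kills the carries rather than any size estimate.
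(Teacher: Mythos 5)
Your proof is correct and follows essentially the same route as the paper's: both set up the carry recurrence of the schoolbook multiplication and run a two-ended induction inward from the extremal digits, using the self-reciprocity of the convolution coefficients of $P(A,x)P^{*}(A,x)$ together with the palindromicity of the reduced digits and the nonnegativity of carries to kill $\gamma_{\ell+1}$ and $\gamma_{2n-\ell-2}$ at each step. Your packaging of the induction step via the identity $10\,t_{j+1}=-t_{2d-2-j}$ is just a compressed form of the paper's two-stage argument (first bounding $\sigma_\ell$, then back-solving for the mirror carry), so there is no substantive difference.
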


\begin{proof}
Let $A$ be an $n$-digit number such that $A\times A^*$ is a $(2n-1)$-digits palindrome, with the notation $P(A,x) = \sum_{i=0}^{n-1} a_{i}x^{i}$. 
Let $c_{0},c_{1}, \ldots, c_{2n-2}$ be the digits of $A \times A^*$. We now make completely explicit how the digits are manipulated 
when the multiplication is performed. 

Let $\gamma_i$ be the carry that is propagated on the $i$-th digits and $\sigma_i$ be the sum of the products of 
digits that appear in the $i$-th position. Hence,
$$ \gamma_0= 0 $$
and, for all $0 \le i \le 2n-1$,
\begin{align*}
\sigma_i &= \displaystyle \gamma_i + \sum_{k=\max(0,i+1-n)}^{\min (n-1,i)} a_k \, a_{n-1-i+k}\text{,}\\
c_i &= \sigma_i \pmod{10} \text{,}\\
\gamma_{i+1} &= (\sigma_i-c_i)/10 \text{.}
\end{align*}

Note that $(A,A^*)$ is a polynomial pair if and only if $\gamma_{i}=0$ for all $i\le 2n-1$. We prove this fact by induction. 

Since $A \times A^*$ has only $2n-1$ digits, we have $c_{2n-1} =0$, and thus $\gamma_{2n-1} =0$. 
Suppose that for a certain integer $\ell$ we have proven that $\gamma_{\ell} =0$ and $\gamma_{2n-\ell-1} =0$. 
Since $\gamma_{2n-\ell-1}=0$, we must have 
\begin{equation*}
\sigma_{2n-\ell-2} = c_{2n-\ell-2}\le 9 \text{.}
\end{equation*}
Now, 
\begin{equation*}
\sigma_\ell = \sigma_{2n-2-\ell} - \gamma_{2n-\ell-2} + \gamma_\ell = \sigma_{2n-\ell-2} - \gamma_{2n-\ell-2}
\end{equation*}
must be $\leq 9$ too. 
So $\gamma_{\ell +1} = 0$ and $c_{\ell} = \sigma_{\ell}$. Since $A \times A^*$ is a palindrome, we have $c_\ell = c_{2n-\ell-2}$. 
So we also have $ \sigma_{2n-\ell-2} = \sigma_\ell$. Now we can compute that 
\begin{equation*}
\gamma_{2n-\ell-2} = \sigma_{2n-\ell-2} - \sigma_\ell + \gamma_\ell = 0\text{,}
\end{equation*}
which concludes the induction step.
\end{proof}

\section{Squares and Palindromes}\label{sec:six}
In this short section we show that some results established above shed light on several connections between palindromes and squares. 

There are two sequences in OEIS~\cite{OEIS} concerning palindromes and squares. Sequence $A002779$ lists down palindromic perfect squares while Sequence $A002778$ contains integers whose squares are palindromes. The next result, which is a direct consequence of Proposition~\ref{prop:pal}, gives a sufficient but not a necessary condition for an integer $A$ to belong to Sequence $A002778$.
\begin{proposition}
If $(A,A)$ is a polynomial pair with $A$ a palindrome, then $A^{2}$ is a palindrome.
\end{proposition}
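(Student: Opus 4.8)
The plan is to reduce the statement directly to Proposition~\ref{prop:pal} by exploiting the palindromicity of $A$. The first step I would take is to record the trivial but essential observation that, by the very definition of a palindrome, $A = A^{*}$. Consequently the hypothesis that $(A,A)$ is a polynomial pair is, after substituting the second copy of $A$ by $A^{*}$, identical to the hypothesis that $(A,A^{*})$ is a polynomial pair. Nothing needs to be checked here beyond this identification, since the defining identity $P(A,x)P(B,x) = P(A \times B,x)$ of a polynomial pair is symmetric in its two arguments, so naming the pair $(A,A)$ or $(A,A^{*})$ refers to exactly the same condition once $A = A^{*}$.

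The second step is to invoke Proposition~\ref{prop:pal} verbatim. That proposition asserts that whenever $(A,A^{*})$ is a polynomial pair, the product $A \times A^{*}$ is a palindrome. Applying it to our situation, and then substituting $A^{*} = A$ a final time, gives $A \times A^{*} = A^{2}$, whence $A^{2}$ is a palindrome, which is precisely the desired conclusion.

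I expect no genuine obstacle in this argument: the whole content is the equality $A = A^{*}$, which converts the square $A^{2}$ into the reversal product $A \times A^{*}$ to which Proposition~\ref{prop:pal} already applies. The only point deserving a moment's attention is the symmetry remark above, confirming that the polynomial-pair hypothesis is insensitive to the order of its two factors; this is immediate and requires no computation. Thus the proof is essentially a one-line specialization of an earlier result rather than an independent argument.
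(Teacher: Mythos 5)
Your proof is correct and is exactly the argument the paper intends: the paper states this proposition as "a direct consequence of Proposition~\ref{prop:pal}" without further elaboration, and your identification $A=A^{*}$ followed by an application of that proposition is precisely that direct consequence spelled out.
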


Each entry of Sequence $A156317$ in OEIS~\cite{OEIS} is a perfect square that forms either an equal or a larger perfect square when reversed. Here is a technique to produce examples of such integers.
\begin{proposition}
If $(A,A)$ is a polynomial pair then so is $(A^{*},A^{*})$. Moreover, $(A^{2})^{*} = (A^{*})^{2}$.
\end{proposition}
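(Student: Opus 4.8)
The plan is to exploit the identity $P(A^{*},x)=P^{*}(A,x)$, recorded in the proof of Proposition~\ref{prop:pal}, together with the elementary observation that forming the reciprocal of a polynomial merely reverses the order of its coefficients and hence preserves their multiset, and in particular their maximum. Both assertions then reduce to transferring a coefficient bound across this reversal symmetry.

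For the first assertion I would invoke Proposition~\ref{prop:equiv}, which tells us that $(A,A)$ being a polynomial pair is equivalent to every coefficient of $P(A,x)^{2}=P(A,x)P(A,x)$ being at most $9$. The key computation is that
\[
P(A^{*},x)^{2}=\bigl(P^{*}(A,x)\bigr)^{2}=\bigl(P(A,x)^{2}\bigr)^{*}.
\]
The last equality follows because reciprocation respects products once degrees are tracked: if $f$ has degree $m$ and $g$ degree $n$, then $(fg)^{*}=f^{*}g^{*}$, and in particular $(f^{2})^{*}=(f^{*})^{2}$. Since $\bigl(P(A,x)^{2}\bigr)^{*}$ carries exactly the same coefficients as $P(A,x)^{2}$, only listed in reverse order, its largest coefficient is again at most $9$. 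Applying Proposition~\ref{prop:equiv} in the reverse direction to the pair $(A^{*},A^{*})$ then yields that $(A^{*},A^{*})$ is a polynomial pair.

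For the second assertion the cleanest route is to note that $(A,A)$, being a polynomial pair, is palindromic by the earlier proposition asserting that every polynomial pair is palindromic. The palindromic condition for the pair $(A,A)$ reads $P^{*}(A,10)\,P^{*}(A,10)=P^{*}(A\times A,10)$, that is, $(A^{*})^{2}=(A^{2})^{*}$, which is precisely the claim. Alternatively, I could argue directly from the defining identity $P(A,x)^{2}=P(A^{2},x)$ of the polynomial pair $(A,A)$: taking reciprocals gives $\bigl(P^{*}(A,x)\bigr)^{2}=P^{*}(A^{2},x)$, and substituting $x=10$ produces $(A^{*})^{2}=(A^{2})^{*}$.

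The one genuine point I would guard most carefully is the behaviour of the reversal map when $A$ ends in one or more zeros: there the integer $A^{*}$ has fewer decimal digits than $A$, so one must confirm that the polynomial $P(A^{*},x)$ built from the digits of $A^{*}$ still coincides with $P^{*}(A,x)$. This persists, since the trailing zeros of $A$ become top-degree zero coefficients of $P^{*}(A,x)$ that are harmless under reciprocation; it is nonetheless worth stating explicitly, because every step of the argument rests on the coincidence $P(A^{*},x)=P^{*}(A,x)$. Apart from this bookkeeping, the proof is a direct transfer of the coefficient bound through the reversal symmetry, and I anticipate no substantive difficulty.
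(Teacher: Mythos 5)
Your proof is correct and rests on the same two facts as the paper's one-line argument, namely that reciprocation of polynomials respects products and that $P(A^{*},x)=P^{*}(A,x)$; the paper simply compresses both assertions into the single chain $P((A^{2})^{*},x) = P^{*}(A^{2},x) = (P^{*}(A,x))^{2} = (P(A^{*},x))^{2}$. Your separate treatment of the first assertion via the coefficient bound of Proposition~\ref{prop:equiv} is a harmless elaboration of the same idea, and your care about trailing zeros is a reasonable (and correctly resolved) point of bookkeeping.
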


\begin{proof} 
It suffices to verify that $P((A^{2})^{*},x) = P^{*}(A^{2},x) = (P^{*}(A,x))^{2} = (P(A^{*},x))^{2}$.
\end{proof}

\section{Additive Pairs}\label{sec:seven}
It is natural to consider as well the additive analogue of polynomial pairs. The pair $(A,B)$ of positive integers is said to be 
an {\it additive pair} if
\begin{equation*}
P(A,x)+P(B,x)=P(A+B,x) \text{.}
\end{equation*}
The counterpart of Proposition~\ref{prop:equiv} can then be established.
\begin{proposition}\label{prop:equiv2}
The following assertions are equivalent.
\begin{enumerate}[1)]
\item The pair $(A,B)$ is an additive pair.
\item The addition of $A$ by $B$ can be performed without carry.
\item The coefficients of the polynomial $P(A,x)+P(B,x)$ are bounded above by $9$.
\end{enumerate}
\end{proposition}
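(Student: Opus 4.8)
The plan is to mirror the proof of Proposition~\ref{prop:equiv}, exploiting the fact that the additive setting is structurally simpler than the multiplicative one: the coefficient of $x^{i}$ in $P(A,x)+P(B,x)$ is merely $a_{i}+b_{i}$, rather than a convolution sum $\sum_{i+k=j} a_{i}b_{k}$. Writing $A=\sum_{i} a_{i} 10^{i}$ and $B=\sum_{i} b_{i} 10^{i}$, I would first record the identity $P(A,x)+P(B,x)=\sum_{i}(a_{i}+b_{i})x^{i}$, so that assertion 3) becomes the statement that $a_{i}+b_{i}\le 9$ for every index $i$.

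To establish that 1) implies 2), I would argue by contraposition exactly as in Proposition~\ref{prop:equiv}. Suppose the addition of $A$ and $B$ cannot be performed without carry, and let $j$ be the smallest index for which $a_{j}+b_{j}>9$. Because every lower position satisfies $a_{i}+b_{i}\le 9$, no carry is propagated into position $j$, so the $j$-th digit of $A+B$ equals $(a_{j}+b_{j})\bmod 10$, which is strictly smaller than $a_{j}+b_{j}$. Hence the coefficient of $x^{j}$ in $P(A+B,x)$ differs from the coefficient of $x^{j}$ in $P(A,x)+P(B,x)$, and $(A,B)$ fails to be an additive pair.

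For 2) implies 3), I would observe that ``addition without carry'' means precisely that $a_{i}+b_{i}\le 9$ at every position, which is assertion 3) verbatim, so the two are essentially a restatement of one another. Finally, for 3) implies 1), I would substitute $x=10$: if every $a_{i}+b_{i}\le 9$, then $\sum_{i}(a_{i}+b_{i})10^{i}$ is already a legitimate decimal expansion, whence the digits of $A+B$ are exactly $a_{i}+b_{i}$ and $P(A+B,x)=\sum_{i}(a_{i}+b_{i})x^{i}=P(A,x)+P(B,x)$.

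There is no serious obstacle here; the only point requiring care is the carry-propagation argument in the step 1) implies 2), where one must invoke the minimality of $j$ to guarantee that the incoming carry vanishes and the $j$-th digit genuinely drops below $a_{j}+b_{j}$. Everything else is a direct transcription of the multiplicative proof with the convolution replaced by a termwise sum, and in fact the equivalence of 2) and 3) is immediate in the additive case.
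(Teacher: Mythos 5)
Your proposal is correct and follows essentially the same route as the paper: contraposition via the smallest index $j$ with $a_{j}+b_{j}>9$ for 1) implies 2), the observation that carry-free addition is a verbatim restatement of the coefficient bound for 2) implies 3), and substitution of $x=10$ for 3) implies 1). Your version is, if anything, slightly more careful than the paper's in spelling out why the minimality of $j$ guarantees no incoming carry at position $j$.
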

\begin{proof}
We use the same representation of $P(A,x)$ and $P(B,x)$ as in the proof of Proposition~\ref{prop:bound}. 
Let $j$ be  the smallest integer such that $c_j = a_j + b_j >9$. Then $c_j$ is the coefficient of $x^{j}$ in $P(A,x)+P(B,x)$ 
while $c_j \pmod{10} \neq c_j$ is the coefficient of $x^j$ in $P(A+B,x)$. By contrapositive argument, 1) implies 2).

It is clear by definition of polynomial addition that 2) implies 3). To verify that 3) implies 1) note that for $0 \leq j \leq \max(a,b)$ 
we have $c_j=a_j+b_j \leq 9$, which leads immediately to the desired conclusion since $c_j$ is the coefficient of $x^{j}$ in both $P(A+B,x)$ 
and $P(A,x)+P(B,x)$.
\end{proof}

A sufficient condition for $(A,B)$ to be an additive pair is $A_\infty + B_\infty \leq 9.$ Additive pairs can be used to generates palindromes.

\begin{proposition}
If $(A,A^{*})$ is an additive pair, then $A + A^{*}$ is a palindrome.
\end{proposition}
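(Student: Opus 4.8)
The plan is to mirror the multiplicative argument of Proposition~\ref{prop:pal}, replacing the product of reciprocal polynomials by their sum. The two ingredients I would reuse are, first, the identity $P(A^{*},x) = P^{*}(A,x)$ already noted in the proof of Proposition~\ref{prop:pal}, and second, the fact that an integer $C$ is a palindrome exactly when its digit polynomial is self-reciprocal, i.e.\ $P^{*}(C,x) = P(C,x)$, since evaluating such an identity at $x=10$ forces $C^{*}=C$.

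First I would unwind the hypothesis. Since $(A,A^{*})$ is an additive pair, by definition $P(A+A^{*},x) = P(A,x) + P(A^{*},x)$, and substituting $P(A^{*},x)=P^{*}(A,x)$ gives
$$P(A+A^{*},x) = P(A,x) + P^{*}(A,x).$$
Writing $P(A,x)=\sum_{i=0}^{a} a_{i}x^{i}$, so that $P^{*}(A,x)=\sum_{i=0}^{a} a_{i}x^{a-i}$ (keeping the formal index range up to $a$, even when $a_{0}=0$ makes the top coefficient vanish), the coefficient of $x^{i}$ in the sum is $s_{i}:=a_{i}+a_{a-i}$.

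The heart of the argument is the observation that the sequence $(s_{i})$ is symmetric: $s_{a-i}=a_{a-i}+a_{i}=s_{i}$ for every $i$. Hence $P(A,x)+P^{*}(A,x)$, and therefore $P(A+A^{*},x)$, is self-reciprocal. I would then confirm that $a$ is genuinely the degree of $P(A+A^{*},x)$: its leading coefficient is $s_{a}=a_{a}+a_{0}\ge a_{a}\ge 1$ because $a_{a}$ is the nonzero leading digit of $A$, so $A+A^{*}$ has exactly $a+1$ digits. Finally, the additive-pair hypothesis, read through Proposition~\ref{prop:equiv2}, guarantees each $s_{i}\le 9$, so the $s_{i}$ really are the decimal digits of $A+A^{*}$ with no carries; evaluating the self-reciprocity identity at $x=10$ then yields $(A+A^{*})^{*}=A+A^{*}$.

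The one point requiring care, and the closest thing to an obstacle, is the bookkeeping about degrees and leading zeros. When $A$ ends in the digit $0$, the reciprocal polynomial $P^{*}(A,x)$ has a vanishing top coefficient, so I must fix the reference degree to be $a$ throughout, rather than the literal degree of each summand, for the reciprocal operation to behave linearly and for the symmetry $s_{i}=s_{a-i}$ to match the reversal used in computing $(A+A^{*})^{*}$. Once the reference degree is pinned to $a$ and the no-carry condition is used to identify the $s_{i}$ with honest digits, the conclusion is immediate.
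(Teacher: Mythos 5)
Your proof is correct and follows essentially the same route as the paper's: the paper's own argument is the one-line observation that $P(A^{*},x)=P^{*}(A,x)$ and that $P(A+A^{*},x)=P(A,x)+P^{*}(A,x)$ is self-reciprocal, which is exactly your symmetry $s_{i}=s_{a-i}$ evaluated at $x=10$. Your additional bookkeeping about the reference degree, the nonvanishing leading coefficient, and the no-carry identification of the $s_{i}$ with honest digits merely makes explicit what the paper leaves as ``straightforward to verify.''
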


\begin{proof}
It is straightforward to verify that $P(A^{*},x)=P^*(A,x)$ and that $P(A+A^{*},x)=P(A,x)+P^{*}(A,x)$ is a self-reciprocal polynomial.
\end{proof}

There are, however, integers $A$ such that $A+A^{*}$ is a palindrome yet $(A,A^{*})$ is not an additive pair. The numbers $56$ and $506$ are some easy examples of such $A$.

\section{Summary}
In this note we have shown how to use polynomial pairs to study the properties of palindromic pairs. Furthermore, a large number of palindromic pairs can be constructed by using polynomial pairs. Connections to well-known numbers and integer sequences in OEIS have also been explicated.

It is of interest to either find counterexamples to or to prove the validity of the conjectures mentioned here for 
future investigations. As an added incentive, we offer a ripe durian for a correct proof of, or a valid counterexample to, any of the conjectures. 
 
\section{Acknowledgement}
The authors thank Abdul Qatawneh for helpful discussions.

\bigskip
\hrule
\bigskip
\noindent 2010 {\it Mathematics Subject Classification}:
Primary 11B75; Secondary 97A20.

\noindent \emph{Keywords: } 
number reversal, palindromes, palindromic pair, polynomial pair, repunit 

\bigskip
\hrule
\bigskip

\noindent (Concerned with sequences \seqnum{A002778}, \seqnum{A004023}, \seqnum{A062936}, \seqnum{A156317})

\bigskip
\hrule
\bigskip

\vspace*{+.1in}
\noindent
Received {\it to be supplied}.
revised version received  {\it to be supplied}
Published in {\it Journal of Integer Sequences}, {\it to be supplied}.

\bigskip
\hrule
\bigskip

\noindent
Return to
\htmladdnormallink{Journal of Integer Sequences home page}{http://www.cs.uwaterloo.ca/journals/JIS/}.
\vskip .1in

\end{document}